\numberwithin{equation}{section}
\newtheorem{thm}[equation]{Theorem} 
\newtheorem{lemma}[equation]{Lemma} 
\newtheorem{example}[equation]{Example}
\newtheorem{remark}[equation]{Remark}
\newcommand{\ep}{\epsilon}
\newcommand{\DOT}{\setlength{\unitlength}{1pt}\begin{picture}(2.5,2)
               (1,1)\put(2.5,2.5){\circle*{2}}\end{picture}}
\newcommand{\bu}{\DOT}
\newcommand{\Hom}{\mbox{\rm Hom\,}}
\newcommand{\ot}{\otimes}
\newcommand{\del}{\partial}
\newcommand{\Wedge}{\textstyle\bigwedge}
\begin{document}
\begin{abstract}
The deformation theory of an algebra is controlled by the Gerstenhaber bracket, 
a Lie bracket on Hochschild cohomology.
We develop techniques for evaluating Gerstenhaber brackets
of semidirect product algebras
recording actions of finite groups
over fields of positive characteristic. 
The Hochschild cohomology and Gerstenhaber bracket
of these skew group algebras can be complicated
when the characteristic of the underlying field divides the group
order.  We show how to investigate Gerstenhaber brackets using
twisted product resolutions, which are often smaller
and more convenient than the cumbersome bar resolution
typically used.  These resolutions provide a concrete description of the Gerstenhaber bracket
suitable for exploring questions in deformation theory.
We demonstrate with the prototypical example of 
a graded Hecke algebra (rational Cherednik
algebra) in positive characteristic.
\end{abstract}

\title[Gerstenhaber brackets]
{Gerstenhaber brackets\\ for skew group algebras\\ in positive characteristic}

\date{May 22, 2019.}
\thanks{The first author was partially supported by Simons grant 429539.
The second author was partially supported by NSF grant DMS-1665286.
Corresponding author: Anne Shepler.}
\keywords{Hochschild cohomology, Gerstenhaber brackets,
skew group algebras}

\author{A.V.\ Shepler}
\address{Department of Mathematics, University of North Texas,
Denton, Texas 76203, USA}
\email{ashepler@unt.edu}
\author{S.\  Witherspoon}
\address{Department of Mathematics\\Texas A\&M University\\
College Station, Texas 77843, USA}\email{sjw@math.tamu.edu}

\maketitle

\section{Introduction}

The Hochschild cohomology space of an associative algebra
is a Gerstenhaber algebra under two binary operations, 
the cup product and the Gerstenhaber bracket.
The Gerstenhaber bracket is a Lie bracket controlling the deformation theory
of the algebra.  Historically, it has been more difficult to compute
than the cup product:
The bracket is defined in terms of the cumbersome bar resolution
and notoriously resists transfer 
to more convenient resolutions.  In general, we lack user-friendly
formulas giving the Gerstenhaber bracket
explicitly.

We consider the Hochschild cohomology of a skew group algebra 
(semidirect product algebra) arising from the 
action of a finite group $G$ on an algebra $S$.  
We work in the modular setting,
i.e., over a field $k$ of positive characteristic that may divide the group order $|G|$.
In this setting, the Hochschild cohomology of $S\rtimes G$
is complicated by the potentially onerous cohomology of $kG$, in contrast to the characteristic zero case where it is always trivial.

Computations of the Gerstenhaber bracket on $S\rtimes G$
directly using the bar resolution often yield little 
useful information---the bar resolution itself is too large and unwieldy. 
It can be a struggle even to  
describe adequately the Hochschild cohomology
using the bar resolution.
Thus one seeks a description of the Gerstenhaber
bracket in terms of smaller resolutions used to compute
Hochschild cohomology,
a description that is concrete and straightforward 
to apply in specific examples.

In this note, we consider the flexible {\em twisted product resolution}
of a skew group algebra: one chooses a
convenient resolution for $S$ and another for $G$
and then combines them to create a resolution of $S\rtimes G$.
We show how to apply new techniques from~\cite{NW1}
on Gerstenhaber brackets to
twisted product resolutions for skew group algebras from~\cite{ueber,twisted}.
This approach provides advantages over employing the often unmanageable but
traditional bar resolution.
We produce an explicit description
of the Gerstenhaber bracket that should prove user-friendly and 
we illustrate
with an example from deformation theory.
This quintessential example using a small
transvection group captures the difference between
the modular and nonmodular settings,
both in the theory of reflection groups and 
in the theory of graded Hecke algebras (and rational
Cherednik algebras, see~\cite{Norton}).

In Section~\ref{sec:NW1}, we recall the twisted product resolution
from~\cite{ueber,twisted} obtained by twisting a resolution of $S$
with one for $G$.
We recall methods of~\cite{NW1} analyzing Gerstenhaber brackets
in Section~\ref{sec:gb}
and show
how they apply to twisted product resolutions for skew group algebras. 
We illustrate these techniques by 
showing how to compute some Gerstenhaber brackets
concretely
for a small transvection group example from~\cite{ueber} in Section~\ref{sec:example}. 
Throughout, $k$ is a field of arbitrary characteristic
and $\ot = \ot_k$. 


\section{Twisted product resolutions}\label{sec:NW1}
We recall the twisted product resolution from~\cite{ueber,twisted}.
Consider a finite group $G$ acting on a $k$-algebra $S$ by automorphisms. 
Let $A=S\rtimes G$ be the corresponding skew group algebra:
As a vector space, $S\rtimes G = S\ot kG$,
and we abbreviate the element $s\ot g$ by $sg$ ($s\in S$, $g\in G$)
when no confusion can arise.
Multiplication is defined by
\[
   (sg) \cdot (s'g') = s ({}^gs') \, gg'
\quad\text{ for all }
s,s'\in S \text{ and } g,g'\in G\, .
\]
The action of $g$ on $s'$ here is denoted by $ {}^g s'$.
We use the enveloping algebra $S^e=S\ot S^{op}$
of any algebra $S$
to express bimodule actions as left actions.

\subsection*{The twisted product resolution}
We consider projective resolutions
$$
\begin{aligned}
\text{ (i) }\ \ 
C:\, \ldots \rightarrow & \ \, C_2 \rightarrow C_1 \rightarrow C_0 \rightarrow 0
\ \ \text{ of } kG \text{ as a $kG$-bimodule, and}\\
\text{ (ii)  } \ \ 
D:\, \ldots \rightarrow & \ D_2 \rightarrow D_1 \rightarrow D_0 \rightarrow 0\
\ \ \text{ of } S \text{ as an $S$-bimodule.}
\end{aligned}
$$
We assume the resolution $C$ is $G$-graded, with compatible group action:
\begin{equation}
\label{G-grading}
g_1\big((C_i)_{g_2}\big)g_3 = (C_i)_{g_1g_2g_3}
\quad\text{ for all }\ g_1,g_2,g_3\in G
\ \text{and all degrees } i .
\end{equation}
We also assume $D_{\DOT}$ carries a compatible action of $G$:
Each $D_{i}$ is left $kG$-module 
with 
\begin{equation}
\label{D-compatible}
g\cdot (s\cdot d) = {}^gs\cdot (g\cdot d)
\quad\text{ for all }
g\in G,\ s\in S,\ d \in D
\end{equation}
and the differentials are $kG$-module homomorphisms. 
This ensures  $D_{\DOT}$ is 
 {\em compatible} with the twisting map $g \ot s \mapsto {}^gs \ot g$
given by the group action (see~\cite[Definition~2.17]{twisted}). 
This is the setting, for example, when $C_{\bu}$ is the bar or reduced bar resolution
of $kG$ and when $D_{\bu}$ is the Koszul resolution
of a Koszul algebra $S$
(see~\cite[Prop 2.20(ii)]{twisted}).

The {\em twisted product resolution} $X=C \ot^G D$ 
of the algebra $S\rtimes G$ is
the total complex of the double complex $C_{\DOT}\ot D_{\DOT}$,
$$
X = C \ot^G D
\quad\text{ where }\quad
X_n = \bigoplus_{i+j=n} C_i \ot D_j\, 
$$
with
each $X_n$ suffused with the additional structure
of a $(S\rtimes G)$-bimodule defined by
\[
   s'g' \cdot (c\ot d)\cdot sg
   = 
   g'cg \ \ot\ ({}^{(g'hg)^{-1}}s' )({}^{g^{-1}}(ds))
\ \text{ for }\ 
c\in C_h, d\in D, g,g', h\in G, s,s'\in S\, .
\]
The differential on $X$ is $\del_n = \sum_{i+j=n} (\del_i \ot 1)+ (-1)^i (1\ot \del_j )$ as usual.

With this action, $X$ is a resolution of $A=S\rtimes G$,
i.e., $X$ provides an
exact sequence of $A$-bimodules 
(see~\cite{twisted} or ~\cite[\S4]{quad}):
$$
\ldots\rightarrow X_2\rightarrow X_1 \rightarrow X_0 \rightarrow
A \rightarrow 0\, .
$$

When the $A$-bimodules $X_n$ are all projective as
$A^e$-modules, $X$ is also a projective resolution of $A$.
This occurs, for example, when $D$ is a  Koszul resolution
of a Koszul algebra and $C$ is the bar resolution of $kG$.
(See~\cite[Proposition 2.20(ii)]{twisted}.)

\section{Gerstenhaber brackets on differential graded coalgebras}
\label{sec:gb}

In this section, we summarize some results of \cite{NW1} 
and develop additional techniques for computing Gerstenhaber brackets in the modular setting.
Contrast with~\cite{NW2,SW-G-bracket}, where the characteristic 
of the underlying field was~0. 

\subsection*{Resolutions as differential graded coalgebras}
Consider a $k$-algebra $A$ and a projective resolution $P$ of $A$
as an $A$-bimodule:
$$
\ldots \rightarrow P_2 \rightarrow P_1 \rightarrow P_0 \rightarrow 0\ .
$$
The resolution $P$ is a {\em differential graded coalgebra}
when $P=\oplus_i P_i$ has a coalgebra structure
compatible with its differential $\del_P$.  This means
there is a (degree $0$) chain map $\Delta_P: P\rightarrow P\ot_A P$
lifting the canonical isomorphism $A\stackrel{\sim}{\longrightarrow} A\ot_A A$,  
called a {\em diagonal map},
that is required to be 
$$
\begin{aligned}
&\text{{\em coassociative}},&&  
\hphantom{x}
\hspace{-3ex}
\text{ i.e., }
(\Delta_P\ot 1)\Delta_P = (1\ot \Delta_P)\Delta_P
\text{ as maps } \ P \rightarrow P\ot_A P \ot_A P ,
\text{ and }
\\
&\text{{\em counital}},&&  
\hphantom{x}
\hspace{-3ex}
\text{ i.e., }
(\mu_P\ot 1_P)\Delta_P = 1_P = (1_P\ot\mu_P)\Delta_P\, 
\text{ as maps } P \rightarrow P ,
\end{aligned}
$$
where 
$\mu_P:P_0\rightarrow A$ is augmentation of the complex 
(with $\mu_P$ zero on $P_i$ for $i\geq 1$). 
Throughout, we define
$\mu_P\ot 1_P: P\ot P\rightarrow P$
as the map $p\ot p'\mapsto \mu_P(p)\cdot p'$
(and similarly for $1_P\ot \mu_P$).
Recall that the differential on $P_n\ot_A P_m$ is just
$\del_P\ot 1_P + (-1)^n 1_P \ot \del_P$.

\subsection*{Homotopy from right
to left}
We may map 
the complex $P\ot_A P$ 
to the complex $P$ using either
$\mu_P\ot 1_P$ or $1_P \ot \mu_P$.
When $P$ is a differential graded coalgebra,
these mappings
are chain homotopic by~\cite[Lemma~3.2.1]{NW1}.
(The hypotheses there are slightly stronger,
but the same proof works under our hypotheses here.)
Thus there exists a chain homotopy
from $\mu_P\ot 1_P$ to $1_P \ot \mu_P$, i.e.,
a map $\phi_P: P\ot _A P\rightarrow P$ 
with $P_m\ot_A P_n \rightarrow P_{m+n+1}$
satisfying
\begin{equation}
\label{contractinghomotopy}
     \del_P \, \phi_P + \phi_P\, \del_{P\ot_A P} = \mu_P\ot 1_P - 1_P\ot \mu_P \, .
\end{equation}
\begin{example}{\em 
The bar resolution $B$ of the algebra $A$
is a differential graded coalgebra.
Indeed, for $B_n=A\ot A^{\ot n}\ot A$, a diagonal map
$\Delta_B: B\rightarrow B \otimes_A B$ is defined by 
\begin{equation}
\label{DeltaBar}
\Delta_B(a_0\ot\cdots\ot a_{n+1})=
\sum_{j=0}^n (a_0\ot\cdots\ot a_j \ot 1)\ot _A (1\ot a_{j+1}\ot\cdots
\ot a_{n+1}) 
\end{equation}
for $a_0,\ldots, a_{n+1}$ in $A$.
This map is coassociative and counital.
One choice of homotopy $\phi_B: B\ot_A B \rightarrow B$ 
from $\mu_B\ot 1_B$ to $1_B \ot \mu_B$ is defined by
\begin{equation}\label{eqn:phi-bar}
\begin{aligned}
   \phi_B & \big((a_0\ot\cdots \ot a _{p-1}\ot a_p )\ot_A (a_p'\ot a_{p+1}\ot\cdots
      \ot a_{n+1})\big)
      \\
      & \quad =
  (-1)^{p-1}\,  a_0\ot\cdots\ot a_{p-1}\ot a_p a_p'\ot a_{p+1}\ot\cdots\ot a_{n+1}
\quad\text{  
  for all } a_i, a_p'\in A\, . 
\end{aligned}
\end{equation}
Koszul resolutions of Koszul algebras are also 
differential graded coalgebras~\cite{BGSS}.
The Koszul resolution $P$ of a Koszul algebra embeds into the bar resolution,
however the above map $\phi_B$ does not preserve the image.
Instead, a homotopy $\phi_P$ may be found directly in this case;
see~\cite[\S4]{NW1},~\cite[\S3.2]{NW2}, or~\cite[\S4]{GNW} for some examples. 
}
\end{example}

\subsection*{Definition of the
Gerstenhaber bracket}
The Gerstenhaber bracket for $A$
is defined on cochains on the bar resolution
$B$ of $A$.
Identify each space of cochains
$\Hom_{A^e}(B_n,A)$ with $\Hom_k(A^{\ot n}, A)$ via the canonical isomorphism. 
Then the Gerstenhaber bracket  
$$
[\ ,\ ]:
\Hom_{k}(A^{\ot n},A) \times \Hom_{k}(A^{\ot m},A) 
\rightarrow \Hom_{k}(A^{\ot(n+m-1)},A) \, 
$$
on cochains
is defined by
$$
[f, f' ] \ =\ f \circ f' - (-1)^{(n-1)(m-1)} f' \circ f
$$
where, for $a_i$ in $A$,
the circle product
$(f  \circ f')(a_1 \ot\cdots\ot a_{n+m-1})$ is
$$
\begin{aligned}
\sum_{i=1}^n
(-1)^{(m-1)(i-1)}
\, f \Big(a_1 \ot \cdots\ot a_{i-1} \ot
 f'(a_i \ot \cdots \ot a_{i+m-1}
) \ot a_{i+m} \ot\cdots\ot a_{n+m-1}\Big)\, .
\end{aligned}
$$

\subsection*{Gerstenhaber brackets on
differential graded coalgebras}
Although the Gerstenhaber bracket is defined using the bar resolution,
we seek descriptions in terms of more convenient
resolutions used to compute Hochschild cohomology.
Suppose $P$ is a projective
resolution of $A$ with a differential graded coalgebra structure.
The Gerstenhaber bracket 
can be defined directly at the chain level on $P$
using~\cite[Theorem~3.2.5]{NW1};
we recall how a homotopy $\phi_P$ (see~(\ref{contractinghomotopy}))
gives the bracket explicitly.

Extend any cochain
$f\in \Hom_{A^e}(P_n,A)$ to all of $P$ by defining
$f\equiv 0$ on $P_m$ with $m\neq n$.
For
$f\in \Hom_{A^e}(P_n,A)$ and $f'\in\Hom_{A^e}(P_m,A)$, 
define
\begin{equation}\label{eqn:commutator}
    [f,f']_P = f\circ_P f' - (-1)^{(n-1)(m-1)} f'\circ_P f
\end{equation}
where $f\circ_P f'$ (similarly $f'\circ_P f$) is the composition
\begin{equation}\label{eqn:circ}
    f \circ_P f':
    P\stackrel{\Delta^{(2)}_P}{\relbar\joinrel\relbar\joinrel\relbar\joinrel
    \longrightarrow} P\ot_A P\ot_A P\stackrel{1_P\ot f'\ot 1_P}
   {\relbar\joinrel\relbar\joinrel\relbar\joinrel\relbar\joinrel\longrightarrow} P\ot _A P
   \stackrel{\phi_P}{\longrightarrow} P \stackrel{f}{\longrightarrow} A .
\end{equation}
Here, $\Delta^{(2)}_P = (1_P\ot \Delta_P)\Delta_P = (\Delta_P\ot 1_P)
\Delta_P$
and $1_P\ot f' \ot 1_P$ has signs attached so that 
\begin{equation}
\label{signsattached}
(1_P\ot f'\ot 1_P)(x\ot y\ot z) = (-1)^{i m} x\ot f'(y)\ot z
\end{equation}
for $x\in P_i$, $y, z\in P$. 
Then~\cite[Theorem~3.2.5]{NW1} implies that
the Gerstenhaber bracket $[\ , \ ]$
of any elements in cohomology
is given at the cochain level on $P$ by the map $[\ ,\ ]_P$
on cocycles.
(Note that~\cite[Theorem~3.2.5]{NW1} has slightly stronger hypotheses,
but the proof indeed holds for any resolution $P$ with the structure
of a differential graded coalgebra.)

\section{Twisted product resolution
as a differential graded coalgebra}
We show in this section that a twisted product resolution $X$ 
of $S\rtimes G$ constructed from two differential graded coalgebras
$C$ and $D$ is again a 
differential graded coalgebra.
We then give the Gerstenhaber bracket for $X$ in terms of 
the maps describing the Gerstenhaber brackets of $C$ and $D$ individually.

Throughout this section, we fix 
\begin{itemize}
\item
a differential graded coalgebra bimodule resolution $(C, \Delta_C, \mu_C)$ of $G$ and
\item
a differential graded coalgebra bimodule resolution $(D, \Delta_D, \mu_D)$ of $S$, 
producing
\item 
a twisted product resolution $X=C \ot^G D$ 
of $A=S\rtimes G$.
\end{itemize}
We assume that $C$ is $G$-graded (as in~(\ref{G-grading}))
with $\Delta_C,\mu_C$ preserving the grading
and also that
$D$ carries a $G$-action (as in~(\ref{D-compatible}))
with $\Delta_D, \mu_D$ both $kG$-module homomorphisms.
This is the case, for example, if $C$ is the bar (or reduced bar) resolution of $kG$
and $D$ is the Koszul resolution of a Koszul algebra
(see~\cite[Proposition 2.20(ii)]{twisted}).

\subsection*{Twisted comultiplication}
In the next lemmas, 
we use diagonal maps for $C$ and $D$ to produce
a diagonal map 
$\Delta_X : X \rightarrow X\ot_A X $.

\begin{lemma}
\label{lem:twisting}
Define a twisting map
$\tau: C\ot  D\rightarrow D\ot  C$
by  
\begin{equation}\label{eqn:tau}
  \tau_{i,j}(c\ot d) = (-1)^{ij} ({}^gd)\ot c
\quad\text{ for all } c\in (C_i)_g \text{ and } d\in D_j\, .
\end{equation}
Then $\tau$ extends to a well-defined chain map 
$$1\ot\tau\ot 1:
(C\ot_{kG}C) \ot (D\ot _S D) \longrightarrow
(C\ot^G D) \ot_{S\rtimes G} (C\ot^G D)\, .$$
\end{lemma}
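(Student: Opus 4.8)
The plan is to define $\tau$ on homogeneous elements via the $G$-grading of $C$ and extend $k$-linearly, then verify the two assertions separately: that the induced map $1\ot\tau\ot 1$ descends to the tensor-product quotients, and that it commutes with the differentials. Because $\tau$ touches only the two middle tensorands, every verification reduces to a computation on a simple element $c_1\ot c_2\ot d_1\ot d_2$ with $c_2\in (C_{i})_{g}$ and $d_1\in D_{j}$ homogeneous, whose image is the element $(-1)^{ij}(c_1\ot {}^{g}d_1)\ot_A(c_2\ot d_2)$ of $X\ot_A X$.

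To prove well-definedness I would first isolate the elementary one-sided actions that the $A$-bimodule formula for $X=C\ot^G D$ induces: for $c\in C_h$ one reads off $\gamma\cdot(c\ot d)=\gamma c\ot d$, $(c\ot d)\cdot\gamma= c\gamma\ot {}^{\gamma^{-1}}\!d$, $(c\ot d)\cdot s = c\ot ds$, and $s'\cdot(c\ot d)=c\ot {}^{h^{-1}}\!s'\cdot d$. The source has exactly two defining relations to respect. For $c_1\gamma\ot_{kG} c_2=c_1\ot_{kG}\gamma c_2$, the two images are $(c_1\gamma\ot {}^{g}d_1)\ot_A(c_2\ot d_2)$ and $(c_1\ot {}^{\gamma g}d_1)\ot_A(\gamma c_2\ot d_2)$; writing $\gamma c_2\ot d_2=\gamma\cdot(c_2\ot d_2)$ and sliding $\gamma$ across $\ot_A$ via the right action identifies them, using ${}^{\gamma^{-1}}({}^{\gamma g}d_1)={}^{g}d_1$. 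For $d_1 s\ot_S d_2 = d_1\ot_S s d_2$, the two images are $(c_1\ot ({}^{g}d_1)({}^{g}s))\ot_A(c_2\ot d_2)$ and $(c_1\ot {}^{g}d_1)\ot_A(c_2\ot s d_2)$; writing $c_2\ot s d_2=({}^{g}s)\cdot(c_2\ot d_2)$ and sliding ${}^{g}s$ across $\ot_A$ identifies them. Since $k$-bilinearity is automatic and these are the only relations, $1\ot\tau\ot 1$ is well defined.

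The chain-map property rests on the single fact that $\tau\colon C\ot D\to D\ot C$ is itself a chain map, which in turn uses that $\del_C$ preserves the $G$-grading (the differential of the $G$-graded resolution $C$ is grading-preserving, so $\del_C c\in(C_{i-1})_g$ for $c\in(C_i)_g$) and that $\del_D$ is $G$-equivariant (the differentials of $D$ are $kG$-module maps, so ${}^g(\del_D d)=\del_D({}^g d)$). Comparing $\tau\del$ with $\del\tau$ on $c\ot d$, the summand $\del_D({}^gd)\ot c$ carries sign $(-1)^{ij}$ on both sides, while ${}^gd\ot\del_C c$ carries signs $(-1)^{ij-j}$ and $(-1)^{ij+j}$; these agree, so the twist $(-1)^{ij}$ is exactly what reconciles the Koszul signs. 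To pass to $1\ot\tau\ot 1$ I would decompose the differentials on source and target into their four summands ($\del_C$ and $\del_D$ on either factor), match the outer $\del_C$ and $\del_D$ as spectators whose Koszul signs agree because $\tau$ has total degree $0$, and dispatch the two crossing summands by the identity for $\tau$ just established (here grading-preservation keeps the relevant $g$ fixed under $\del_C$, and $G$-equivariance lets $\del_D$ pass through the twist).

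I expect the genuine obstacle to be the well-definedness step rather than the sign bookkeeping. The bimodule structure on $X$ is given by a single opaque formula entangling the $G$-grading of $C$ with twisted $G$- and $S$-actions on $D$, and the crux is to recognize that the twist ${}^{g}(-)$ hardwired into $\tau$ is precisely what permits group and algebra elements to be moved across $\ot_A$ in the target. Once the induced one-sided actions on $X$ are correctly extracted, both relation checks collapse to a single line each; but disentangling those actions from the combined formula is where the real care is required.
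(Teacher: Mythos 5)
Your proposal is correct and follows the same route as the paper: compose $1\ot\tau\ot1$ with the canonical surjection onto $(C\ot^G D)\ot_{S\rtimes G}(C\ot^G D)$, verify $kG$-middle linearity in the first two arguments and $S$-middle linearity in the last two using the one-sided actions extracted from the $A$-bimodule formula on $X$, and then check the chain-map identity via the Koszul-sign computation for $\tau$. The paper states these verifications as ``calculations show''; you have supplied exactly those calculations, and your sign bookkeeping and use of the $G$-grading of $\del_C$ and $G$-equivariance of $\del_D$ are correct.
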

\begin{proof}
Consider the map
\[
  C\ot C\ot D\ot D \stackrel{1\ot \tau\ot 1}{\relbar\joinrel\relbar\joinrel\relbar\joinrel
\longrightarrow} C\ot D\ot C\ot D \longrightarrow
(C\ot^G D)\ot_{S\rtimes G} (C\ot^G D),
\]
where the latter map is the canonical surjection. 
Calculations show that the composition of these two maps is
$kG$-middle linear in the first two arguments
and $S$-middle linear in the last two arguments, and so
it induces a well-defined map as claimed. 
A calculation shows that it is a chain map. 
\end{proof}

\begin{lemma}
\label{ComultiplicationOnX}
Let $X=C\ot^G D$ be a twisted product resolution of $S\rtimes G$ 
for differential graded coalgebras 
$C$ and $D$ resolving $kG$ and $S$, respectively, as above.  Then $X$
is a differential graded coalgebra as well with comultiplication
$\Delta_X : X\rightarrow X\ot _{A} X$ given by
$$
\Delta_X=(1\ot \tau\ot 1) (\Delta_C\ot \Delta_D)\, .
$$ 
\end{lemma}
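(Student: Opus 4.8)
The plan is to verify the four defining properties of a differential graded coalgebra structure for the pair $(X,\Delta_X)$: that $\Delta_X$ is a well-defined degree-zero chain map $X\to X\ot_A X$, that it lifts the canonical isomorphism $A\xrightarrow{\sim}A\ot_A A$, and that it is coassociative and counital. The construction is engineered so that each property descends from the corresponding property of $\Delta_C$ and $\Delta_D$ once the twisting map $\tau$ is threaded in, so the real work is in checking that $\tau$ intertwines the two comultiplications coherently.

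First I would establish that $\Delta_X$ is a well-defined chain map. Since $\Delta_C\colon C\to C\ot_{kG}C$ and $\Delta_D\colon D\to D\ot_S D$ are degree-zero chain maps, their tensor product $\Delta_C\ot\Delta_D$ is a degree-zero chain map from $X=C\ot D$ (with its total-complex differential) to $(C\ot_{kG}C)\ot(D\ot_S D)$, carrying the usual Koszul sign; as both factors have degree zero, no extra sign intervenes. This target is precisely the domain of the well-defined chain map $1\ot\tau\ot 1$ supplied by Lemma~\ref{lem:twisting}, so the composite $\Delta_X=(1\ot\tau\ot 1)(\Delta_C\ot\Delta_D)$ is a well-defined degree-zero chain map $X\to X\ot_A X$. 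I would also confirm $\Delta_X$ is a homomorphism of $A$-bimodules, which follows from $\Delta_C$ being $kG$-bilinear and grading-preserving and $\Delta_D$ being $S$-bilinear and $kG$-linear, matched against the twisted action defining $X\ot_A X$. In homological degree $0$ one has $X_0=C_0\ot D_0$, and I would check that $\Delta_X$ reduces modulo the differentials to the canonical map $A\to A\ot_A A$; this follows because $\Delta_C$ and $\Delta_D$ lift the canonical maps for $kG$ and $S$ and because the augmentation $\mu_X$ factors as $\mu_C\ot\mu_D$ under the identification $S\rtimes G = S\ot kG$.

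The heart of the argument, and the step I expect to be the main obstacle, is coassociativity: I would expand both $(\Delta_X\ot 1)\Delta_X$ and $(1\ot\Delta_X)\Delta_X$ as maps $X\to X\ot_A X\ot_A X$ and show they agree. After pushing the two copies of $\tau$ into the appropriate positions, the equality reduces to three inputs: coassociativity of $\Delta_C$, coassociativity of $\Delta_D$, and a braiding coherence for $\tau$. The coherence holds because $\Delta_C$ preserves the $G$-grading~(\ref{G-grading}): if $c\in(C_i)_g$ splits as $\Delta_C(c)=\sum c'\ot c''$ with $c'$ in grading $g'$ and $c''$ in grading $g''$, then $g'g''=g$, so moving a factor $d\in D$ past $c$ and past $c'\ot c''$ agree via ${}^{g'}({}^{g''}d)={}^{g'g''}d={}^{g}d$. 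Dually, because $\Delta_D$ is a $kG$-module homomorphism~(\ref{D-compatible}), one has $\Delta_D({}^gd)={}^g\Delta_D(d)$, which handles moving a $C$-factor past the two $D$-factors. The one genuinely delicate point is bookkeeping: the sign $(-1)^{ij}$ in $\tau$ must be reconciled with the Koszul signs produced by the total-complex differentials and by repositioning the degree-preserving maps, and one must confirm that every intermediate expression respects the $kG$- and $S$-middle-linearity relations so that it is well defined over $\ot_A$.

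Finally, counitality $(\mu_X\ot 1)\Delta_X=1_X=(1\ot\mu_X)\Delta_X$ follows from the counitality of $\Delta_C$ and $\Delta_D$ together with the compatibility of $\tau$ with the augmentations $\mu_C$ and $\mu_D$: applying an augmentation collapses one $C$-factor and one $D$-factor, and the grading-zero piece of $\tau$ acts trivially, since the relevant group element is the identity and the sign is $+1$, leaving the identity map as required. Assembling these verifications shows $(X,\Delta_X,\mu_X)$ is a differential graded coalgebra.
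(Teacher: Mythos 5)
Your proposal is correct and follows essentially the same route as the paper: coassociativity is reduced to coassociativity of $\Delta_C$ and $\Delta_D$ together with the coherence of $\tau$ coming from the $G$-grading on $C$ and the $kG$-linearity of $\Delta_D$, counitality to counitality of the factors plus $\mu_C$ preserving the grading and $\mu_D$ being $kG$-linear, and the chain-map property to $\tau$, $\Delta_C$, $\Delta_D$ all being chain maps. Your explicit identification of the braiding coherence (${}^{g'}({}^{g''}d)={}^{g}d$ when $\Delta_C(c)=\sum c'\ot c''$ with $g'g''=g$, and $\Delta_D({}^gd)={}^g\Delta_D(d)$) is exactly what the paper's formal computation with $1\ot\tau\ot 1$ uses implicitly.
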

\begin{proof}
We first check that $\Delta_X$ is coassociative
using the fact that
$\Delta_C$ and $\Delta_D$
are each coassociative.  We use
the $G$-grading on $C$
and the compatible $G$-action on $D$: 
\begin{small}
\[
\begin{aligned}
& (\Delta_X\ot 1_X)\Delta_X \\
&= \big((1\ot\tau\ot 1)(\Delta_C\ot\Delta_D)\ot 1\ot 1\big) (1\ot\tau\ot 1)
   (\Delta_C\ot \Delta_D) \\
&= (1\ot \tau\ot 1\ot 1\ot 1) (\Delta_C\ot \Delta_D\ot 1\ot 1)
   (1\ot \tau\ot 1) (\Delta_C\ot \Delta_D) \\
&= (1\ot\tau\ot 1\ot 1\ot 1) (1\ot 1\ot 1 \ot \tau\ot 1)
   (1\ot 1\ot \tau\ot 1\ot 1) (\Delta_C\ot 1 \ot\Delta_D\ot 1)
   (\Delta_C\ot\Delta_D) \\
&= (1\ot \tau\ot 1\ot 1\ot 1) (1\ot 1\ot 1\ot \tau\ot 1) 
  (1\ot 1\ot \tau\ot 1\ot 1) (1\ot \Delta_C\ot 1\ot \Delta_D)
   (\Delta_C\ot \Delta_D)\\
  &= (1\ot 1\ot 1\ot \tau\ot 1) (1\ot \tau\ot 1\ot 1\ot 1)
  (1\ot 1\ot\tau\ot 1\ot 1) (1\ot\Delta_C\ot 1\ot\Delta_D)
  (\Delta_C\ot \Delta_D) \\
&= (1\ot 1\ot 1\ot \tau\ot 1)(1\ot 1\ot \Delta_C\ot \Delta_D)
   (1\ot\tau\ot 1) (\Delta_C\ot \Delta_D) \\
 &= (1_X\ot \Delta_X) \Delta_X . 
\end{aligned}
\]
\end{small}We 
next verify that $\Delta_X$ is counital
using the fact that $\Delta_C$ and $\Delta_D$ are each counital.
We use the extra
assumption that $\mu_C$ preserves the $G$-grading
and $\mu_D$ is a $kG$-module homomorphism as well
as the definition of the $S\rtimes G$-bimodule structure
on $C\ot^G D$: 
\begin{small}
\[
\begin{aligned}
(\mu_X\ot 1_X)\Delta_X
&= (\mu_C\ot \mu_D\ot 1\ot 1)(1\ot \tau\ot 1) (\Delta_C\ot \Delta_D)\\
&= (\mu_C\ot 1\ot\mu_D\ot 1) (\Delta_C\ot \Delta_D) 
= ((\mu_C\ot 1)\Delta_C) \ot ((\mu_D\ot 1) \Delta_D) \\
&= 1\ot 1 \ \ = \ \ 1_X ,
\end{aligned}
\]
\end{small}and, 
similarly, $(1_X\ot \mu_X)\Delta_X = 1_X$. 

We now need only check that $\Delta_X$ is a chain map, 
i.e.,~$\Delta_X\, \del = (\del\ot 1 + 1\ot \del)\Delta_X$,
for $\del$ the differential on $X$.
This follows from the fact that
$\tau, \Delta_C,\Delta_D$ are all chain maps. 
\end{proof}

\begin{remark}{\em
One may check that the map $1\ot \tau\ot 1$
of~(\ref{eqn:tau}) interpolates between
the maps of the form $\mu\ot 1 - 1\ot\mu$ for the various complexes,
that is,
\begin{equation}\label{eqn:mux}
\mu_X\ot 1_X - 1_X\ot \mu_X = (\mu_C\ot 1_C\ot \mu_D\ot 1_D
    - 1_C\ot\mu_C\ot 1_D\ot\mu_D) (1_C\ot\tau^{-1}\ot 1_D) .
\end{equation}
}
\end{remark}

We now give a theorem describing a homotopy from
$\mu_X\ot 1_X$ to $1_X\ot\mu_X$ concretely in terms of
homotopies from 
$\mu_C\ot 1_C$ to $1_C\ot \mu_C$ and
from $\mu_D\ot 1_D$ to $1_D\ot \mu_D$
by adapting~\cite[Lemmas~3.3, 3.4, and 3.5]{GNW}
to our setting.

\begin{thm}\label{thm:phi}
Let $X=C \ot^G D$ as above
with  homotopies $\phi_C$ 
from
$\mu_C\ot 1_C$ to $1_C\ot \mu_C$
and $\phi_D$ from $\mu_D\ot 1_D$ to $1_D\ot \mu_D$. 
Define $\phi_X: X\ot _{A}X\rightarrow X$ by
\[
     \phi_X=  \big(\phi_{C} \ot \mu_D\ot 1_D + 
       \ep_C(1_C\ot\mu_C)\ot \phi_{D}\big) (1_C\ot \tau^{-1}\ot 1_D)
\]
for $\epsilon_C: C \rightarrow C$
defined by $c\mapsto (-1)^{|c|}$ for homogeneous $c$.
Then $\phi_X$ is a homotopy from $\mu_X\ot 1_X$ to $1_X\ot\mu_X$. 
\end{thm}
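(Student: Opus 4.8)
The goal is to verify the defining homotopy identity~(\ref{contractinghomotopy}) for $\phi_X$, i.e.\ that $\del_X\phi_X + \phi_X\,\del_{X\ot_A X} = \mu_X\ot 1_X - 1_X\ot\mu_X$. My plan is to transport the entire computation to the untwisted fourfold tensor complex $(C\ot_{kG} C)\ot(D\ot_S D)$, where the two given homotopies $\phi_C$ and $\phi_D$ act independently. The device that makes this legitimate is the map $1_C\ot\tau^{-1}\ot 1_D$ occurring both in the definition of $\phi_X$ and in the Remark's formula~(\ref{eqn:mux}): by Lemma~\ref{lem:twisting} the map $1_C\ot\tau\ot 1_D$ is a chain map, and $\tau$ is manifestly invertible from~(\ref{eqn:tau}), so $1_C\ot\tau^{-1}\ot 1_D$ is a chain isomorphism. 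Setting $\Phi = \phi_C\ot\mu_D\ot 1_D + \ep_C(1_C\ot\mu_C)\ot\phi_D$ so that $\phi_X = \Phi\,(1_C\ot\tau^{-1}\ot 1_D)$, and writing $\del'$ for the total differential of $(C\ot_{kG} C)\ot(D\ot_S D)$, I would slide the chain isomorphism to the right to obtain
\[
  \del_X\phi_X + \phi_X\,\del_{X\ot_A X} = \big(\del_X\Phi + \Phi\,\del'\big)\,(1_C\ot\tau^{-1}\ot 1_D).
\]
Comparing with~(\ref{eqn:mux}), it then suffices to establish the core identity
\[
  \del_X\Phi + \Phi\,\del' = (\mu_C\ot 1_C)\ot(\mu_D\ot 1_D) - (1_C\ot\mu_C)\ot(1_D\ot\mu_D)
\]
of maps $(C\ot_{kG} C)\ot(D\ot_S D)\to X$.

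To prove the core identity I would telescope its right-hand side through the mixed term $(1_C\ot\mu_C)\ot(\mu_D\ot 1_D)$, rewriting it as
\[
  \big[(\mu_C\ot 1_C)-(1_C\ot\mu_C)\big]\ot(\mu_D\ot 1_D) + (1_C\ot\mu_C)\ot\big[(\mu_D\ot 1_D)-(1_D\ot\mu_D)\big].
\]
Each bracket is exactly the right-hand side of a homotopy relation~(\ref{contractinghomotopy}): the first is $\del_C\phi_C + \phi_C\,\del_{C\ot C}$ and the second is $\del_D\phi_D + \phi_D\,\del_{D\ot D}$. Accordingly I would show that the summand $\phi_C\ot\mu_D\ot 1_D$ of $\Phi$ reproduces the first line and the summand $\ep_C(1_C\ot\mu_C)\ot\phi_D$ reproduces the second, in the spirit of~\cite[Lemmas~3.3,~3.4,~3.5]{GNW}. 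Each verification expands $\del_X = \del_C\ot 1_D + \ep_C\ot\del_D$—so that $\ep_C$ is precisely the Koszul sign carried by the total differential of $X$—together with the corresponding $\del'$, applies the appropriate homotopy relation to the terms that absorb the $C$- or $D$-differential, and uses that the four augmented maps $\mu_C\ot 1_C,\ 1_C\ot\mu_C,\ \mu_D\ot 1_D,\ 1_D\ot\mu_D$ are chain maps (from $\mu_C\del_C=0$, $\mu_D\del_D=0$ and bimodule linearity of the differentials) to annihilate the remaining cross-terms. In the first summand the cross-terms where $\del_D$ meets $\mu_D\ot 1_D$ cancel because $\phi_C$ has raised the $C$-degree, so the sign carried by $\del_X$ on its $D$-factor is $(-1)^{|c|+1}$ against the $(-1)^{|c|}$ carried by $\del'$.

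I expect the sign bookkeeping in the second summand to be the crux, and this is exactly where $\ep_C$ earns its place. There the first tensor factor $1_C\ot\mu_C$ does not raise $C$-degree, so the two cross-terms in which $\del_C$ is absorbed would, without $\ep_C$, carry equal signs and double rather than cancel. Because the sign $(-1)^{|c|}$ supplied by $\ep_C$ depends on the $C$-degree, and the two cross-terms differ in $C$-degree by one, $\ep_C$ attaches opposite signs to them; the chain-map identity $\del_C(1_C\ot\mu_C)=(1_C\ot\mu_C)\,\del_{C\ot C}$ then forces cancellation. A brief degree count confirms that $\ep_C$ leaves the surviving $D$-term with coefficient $+1$, so this summand returns exactly $(1_C\ot\mu_C)\ot[\,\del_D\phi_D+\phi_D\,\del_{D\ot D}\,]$. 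Granting these two computations, summing them telescopes to the core identity; composing on the right with $1_C\ot\tau^{-1}\ot 1_D$ and invoking~(\ref{eqn:mux}) then yields~(\ref{contractinghomotopy}) for $\phi_X$, which is the assertion that $\phi_X$ is a homotopy from $\mu_X\ot 1_X$ to $1_X\ot\mu_X$.
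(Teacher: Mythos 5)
Your proposal is correct and follows essentially the same route as the paper's proof: conjugate by the chain map $1_C\ot\tau\ot 1_D$ to reduce to the untwisted complex $(C\ot_{kG}C)\ot(D\ot_S D)$, expand $\del_X\Phi+\Phi\,\del'$, cancel the cross-terms using that $\mu_D\ot 1_D$ and $1_C\ot\mu_C$ are chain maps together with the sign identities for $\ep_C$, apply the two homotopy relations, telescope, and finish with~(\ref{eqn:mux}). The only difference is narrative (you telescope the target forward where the paper telescopes the computed sum backward), so no further comment is needed.
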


\begin{proof}
Let
$\phi_X':C\ot C\ot D\ot D\rightarrow C\ot D$
be the map $\phi_C\ot\mu_D\ot 1+
\ep_C(1\ot \mu_C)\ot \phi_D$
so that
$\phi_X=\phi_X'(1\ot \tau^{-1}\ot 1)$.
Then on $(C\ot C) \ot (D\ot D)$,
\begin{align}
\label{first}
  \del_X\, \phi_X\ (1\ot \tau\ot 1)
  &=\ \del_X\, \phi_X'
  \nonumber\\
  &=\ (\del_C\ot 1 + \ep_C \ot \del_D)
  \big(\phi_C\ot \mu_D\ot 1
  +  \ep_C(1\ot\mu_C)\ot \phi_D\big)
\nonumber\\
  &=\ \del_C\phi_C\ot \mu_D\ot 1
  - \phi_C(\ep_C\ot \ep_C)\ot
   \del_D(\mu_D\ot 1)
   \nonumber\\
   &\quad\quad
   - \ep_C\del_C(1\ot \mu_C)\ot \phi_D
   + 1 \ot \mu_C\ot \del_D\phi_D\, ,
\end{align}
and, since $1\ot \tau \ot 1$
is a chain map from $(C\ot C)\ot (D\ot D)$ to $X\ot_A X$, 
\begin{align} 
  \phi_X\, \del_{X\ot X} \, (1\ot\tau\ot 1)
  &=
  \phi_X\, (1\ot\tau\ot 1)\, \del_{(C\ot C)\ot (D \ot D)}
  \nonumber
  = 
  \phi_X'\ \del_{(C\ot C)\ot (D \ot D)}
  \nonumber\\
  &=\phi_X'\, \big(\del_{C\ot C}\ot 1_{D\ot D}+(\ep_C\ot \ep_C)\ot \del_{D\ot D}\big)
  \nonumber\\
  &= \phi_C\del_{C\ot C}\ot \mu_D\ot 1
  + \phi_C(\ep_C\ot\ep_C)\ot (\mu_D\ot 1)\del_{D\ot D}
  \nonumber\\
\label{second}
  &\quad\quad
  +\ep_C(1\ot\mu_C)\del_{C\ot C}\ot \phi_D
  +(1\ot\mu_C)\ot\phi_D\del_{D\ot D}.
\end{align}
Here we used the fact that $\ep_C\phi_C=-\phi_C(\ep_C\ot\ep_C)$,
$\ep_C(1\ot\mu_C)(\ep_C\ot \ep_C)=1\ot \mu_C$, 
and $\del_C\ep_C=-\ep_C\del_C$.
The second term of~(\ref{first}) cancels with
the second term of~(\ref{second}) as $\mu_D\ot 1$ is a chain map;
likewise,
the third terms cancel as $\mu_C\ot 1$ is a chain map.
Hence
$$
\begin{aligned}
  (\del_X\, \phi_X\ +  & \phi_X\, \del_{X\ot X} ) 
  (1\ot \tau\ot 1)\\
&= (\del\phi_C + \phi_C \del)\ot \mu_D\ot 1 + 
1\ot\mu_C\ot (\del\phi_D +\phi_D \del)\\
     & = (\mu_C\ot 1 - 1\ot\mu_C)\ot \mu_D\ot 1 + 
     1\ot\mu_C\ot (\mu_D\ot 1- 1\ot\mu_D)
     \\
     & = \mu_C\ot 1\ot \mu_D\ot 1 - 1\ot\mu_C\ot 1\ot\mu_D,
\end{aligned}
$$
and, by
equation~(\ref{eqn:mux}), 
\[
\begin{aligned}
  \del\phi_X &+ \phi_X \del
     = (\mu_C\ot 1\ot \mu_D\ot 1 - 1\ot\mu_C\ot 1\ot\mu_D)
     (1\ot\tau^{-1}\ot 1)  
     = \mu_X\ot 1 - 1\ot\mu_X .
\end{aligned}
\]
\end{proof} 

\subsection*{Gerstenhaber bracket for skew group algebras}

The next theorem gives the Gerstenhaber bracket 
on a twisted product resolution $X$.
Note that
the twisting map $\tau$ in the theorem is from Lemma~\ref{lem:twisting},
the map $1_X\ot f' \ot 1_X$ has signs attached as in~(\ref{signsattached}),
and
$\epsilon_C$ merely adjusts signs, 
$c\mapsto (-1)^{ |c|}$  for homogeneous $c$ in $C$.

\begin{thm}
\label{maintheorem}
Let $X=C\ot^G D$ be a twisted product resolution of $S\rtimes G$ 
for differential graded coalgebras 
$(C, \Delta_C, \mu_C)$ and $(D,\Delta_D, \mu_D)$ resolving $kG$ and $S$, respectively, as above. 
The Gerstenhaber bracket of elements of Hochschild cohomology
represented by cocycles
$f\in \Hom_{A^e}(X_n,A)$ and $f'\in\Hom_{A^e}(X_m,A)$ is
represented by the cocycle
\begin{equation}\label{eqn:commutator2}
    [f,f'] = f\circ_X f' - (-1)^{(n-1)(m-1)} f'\circ_X f\, ,
\end{equation}
where
$f\circ_X f'$
(similarly $f'\circ_X f$) is the composition
\begin{equation}
\label{eqn:circ2}
    X\xrightarrow{\ \ (1_X\ot\Delta_X)(\Delta_X)\ \ }
    X\ot_A X\ot_A X\xrightarrow{\ 1_X\ot f'\ot 1_{X} \ }
   X\ot _A X
   \xrightarrow{\ \phi_X\ }
   X \xrightarrow{\ f\ } A 
\end{equation}
with 
$$
\begin{aligned}
\Delta_X =&\ (1_C\ot \tau\ot 1_D) (\Delta_C\ot \Delta_D)\, ,
\text{ and } \\
\phi_X =&\ \big(\phi_{C} \ot \mu_D\ot 1_D + 
  (1\ot\mu_C)(\ep_C\ot 1)\ot \phi_{D}\big) (1\ot \tau^{-1}\ot 1)\, .
\end{aligned}
$$

\end{thm}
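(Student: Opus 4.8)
The plan is to deduce the theorem from the general chain-level formula for the Gerstenhaber bracket recalled at the end of Section~\ref{sec:gb}. That formula, a consequence of~\cite[Theorem~3.2.5]{NW1}, computes the bracket on \emph{any} projective bimodule resolution $P$ of $A$ carrying a differential graded coalgebra structure, provided one has a diagonal map $\Delta_P$ and a homotopy $\phi_P$ satisfying~\eqref{contractinghomotopy}: the bracket of the cohomology classes of cocycles $f,f'$ is represented by the cochain $[f,f']_P$ of~\eqref{eqn:commutator}, built from the circle products~\eqref{eqn:circ}. Thus essentially all that remains is to verify that $X=C\ot^G D$, together with the specific $\Delta_X$ and $\phi_X$ displayed in the statement, meets every hypothesis of that recipe, and then to read off~\eqref{eqn:circ2} and~\eqref{eqn:commutator2} as the specialization to $P=X$.

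First I would assemble the three required inputs, each already supplied by the preceding results. Input (i): the complex $X$ is a projective resolution of $A=S\rtimes G$; its construction and exactness are recalled in Section~\ref{sec:NW1}, and I would invoke the standing hypothesis (as in the running example where $C$ is the bar resolution of $kG$ and $D$ is the Koszul resolution of a Koszul algebra) that each $X_n$ is projective over $A^e$, so the recipe genuinely applies. Input (ii): $X$ is a differential graded coalgebra with diagonal $\Delta_X=(1_C\ot\tau\ot 1_D)(\Delta_C\ot\Delta_D)$; this is exactly Lemma~\ref{ComultiplicationOnX}, which supplies coassociativity, counitality, and the chain-map property, and hence also $\Delta^{(2)}_X=(1_X\ot\Delta_X)\Delta_X$ as used in~\eqref{eqn:circ2}. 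Input (iii): $\phi_X$ is a homotopy from $\mu_X\ot 1_X$ to $1_X\ot\mu_X$ satisfying~\eqref{contractinghomotopy}; this is precisely Theorem~\ref{thm:phi}. Here I would note that the second summand of $\phi_X$ as written in the present statement, $(1\ot\mu_C)(\ep_C\ot 1)\ot\phi_D$, agrees with the form $\ep_C(1_C\ot\mu_C)\ot\phi_D$ of Theorem~\ref{thm:phi}, since $\mu_C$ lands in homological degree zero and therefore $\ep_C(1_C\ot\mu_C)=(1_C\ot\mu_C)(\ep_C\ot 1_C)$.

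With these inputs in hand I would conclude by substituting $P=X$, $\Delta_P=\Delta_X$, and $\phi_P=\phi_X$ into~\eqref{eqn:circ} and~\eqref{eqn:commutator}. The composition~\eqref{eqn:circ} then becomes exactly~\eqref{eqn:circ2}, and~\eqref{eqn:commutator} becomes~\eqref{eqn:commutator2}; by the recipe, these represent the Gerstenhaber bracket on Hochschild cohomology, which is the assertion.

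I expect the genuine substance to lie not in this final assembly but in the two computational results it rests on, Lemma~\ref{ComultiplicationOnX} and Theorem~\ref{thm:phi}, both already established. The one point deserving care is the applicability of~\cite[Theorem~3.2.5]{NW1} in the present modular setting: its hypotheses there are slightly stronger than ours, so I would confirm (as remarked in Section~\ref{sec:gb}) that its proof uses only that $X$ is a projective resolution carrying a coassociative counital diagonal and a homotopy $\phi_X$ obeying~\eqref{contractinghomotopy}, none of which is sensitive to the characteristic of $k$ or to whether $|G|$ is invertible. Verifying the projectivity of the $X_n$ in the cases of interest is the only other hypothesis that must be checked honestly rather than inherited.
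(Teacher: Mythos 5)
Your proposal is correct and follows essentially the same route as the paper: the paper's proof is precisely the assembly of Lemma~\ref{lem:twisting}, Lemma~\ref{ComultiplicationOnX}, and Theorem~\ref{thm:phi} with the chain-level recipe~(\ref{eqn:circ})--(\ref{eqn:commutator}) from \cite{NW1}. Your added remarks---the sign identity reconciling $(1\ot\mu_C)(\ep_C\ot 1)$ with $\ep_C(1_C\ot\mu_C)$, and the caveat about the hypotheses of \cite[Theorem~3.2.5]{NW1}---are consistent with what the paper already notes in Section~\ref{sec:gb}.
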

\begin{proof}
We combine
Lemmas~\ref{lem:twisting}, Lemma~\ref{ComultiplicationOnX}, and Theorem~\ref{thm:phi} with~(\ref{eqn:circ}) and~(\ref{eqn:commutator}).
\end{proof}

\begin{example}{\em
In case $S=S(V)\cong k[x_1, \ldots, x_n]$, 
the symmetric algebra on a finite dimensional vector space $V$,
we take $D$ to be the Koszul resolution for which a choice
of $\phi_D$ has been made in~\cite[\S4]{NW1} (see also~\cite[\S3.2]{NW2}).
We may take $C$ to be the bar or reduced bar resolution of $kG$ for some applications,
with homotopy $\phi_C$ as defined by 
equation~(\ref{eqn:phi-bar}). 
}
\end{example}

\section{A small transvection group example}\label{sec:example}
We end by demonstrating how to use a twisted product resolution
to compute Gerstenhaber brackets explicitly via Theorem~\ref{maintheorem}. 
We also see how computation of explicit brackets
can shed light on questions
in deformation theory (see~\cite{ueber}).
We illustrate with the
prototype example of
a graded Hecke algebra (or rational Cherednik algebra)
in positive characteristic
(see~\cite{Norton} and~\cite{ueber}).
In the nonmodular setting, these algebras
have parameters supported only on the identity
group element and on
bireflections; in the modular setting,
parameters can also be supported on reflections.
All reflections in a finite linear 
group $G$ acting in the modular setting
are either diagonalizable or 
act as in this example.
We include some explicit details to illustrate
how to evaluate the maps in Theorem~\ref{maintheorem}
concretely.
We find both a nonzero and a zero Gerstenhaber bracket.

\subsection{Group action and twisted product resolution}
Say $\text{char}(k)=p>0$ and consider the cyclic group $G\simeq\mathbb{Z}/p\mathbb{Z}$ 
acting on $V=k^2$ with basis $v,w$ 
generated by$$
g=\left(\begin{smallmatrix} 1 & 1\\0 & 1 \end{smallmatrix}\right),
\quad\text{ so that }\quad
    {}^g v = v \ \mbox{ and }  \ {}^gw = v+w .
$$
We work in the twisted product resolution 
$X=C\ot^G D$ of $S(V)\rtimes G$ obtained
from twisting the reduced bar resolution $C$
of $kG$ with the Koszul resolution $D$ of $S(V)$:
\[ 
X_n = \bigoplus_{i+j=n} X_{i,j}
\quad\text{for}\quad
   X_{i,j} = kG\ot (\overline{kG})^{\ot i}\ot kG
   \ot S(V)\ot \Wedge^j V \ot S(V).
\]
Here, $C_n=kG\ot (\overline{kG})^{\ot n} \ot kG$
with $\overline{kG}=kG/k1_G$ and
$D_n = S(V)\ot \Wedge^n V\ot S(V)$.
Then $C$ and $D$ satisfy the conditions
specified in Section~\ref{sec:gb}, and
Theorem~\ref{maintheorem} applies.

\subsection{Cochains}
Consider cochains on the resolution $X$: 
$$
\begin{aligned}
&\kappa\in\Hom_{(S(V)\rtimes G)^e}(X_{0,2}, S(V)\rtimes G), \\
&\lambda\in\Hom_{(S(V)\rtimes G)^e} (X_{1,1}, S(V)\rtimes G), 
\mbox{ and } \\
&\delta\in\Hom_{(S(V)\rtimes G)^e}(X_{0,1}, S(V)\rtimes G)
\end{aligned}
$$
defined by (with subscripts on the tensor signs suppressed for brevity)
\begin{small}
\begin{eqnarray*}
    \lambda\big((1_G\ot g^i\ot 1_G)\ot (1_S\ot v\ot 1_S)\big) &=& 0 , \\ 
   \lambda\big((1_G\ot g^i\ot 1_G)\ot (1_S\ot w\ot 1_S)\big) &=& \ ig^{i-1}, \\
   \kappa \big((1_G\ot 1_G)\ot (1_S\ot v\wedge w\ot 1_S)\big) &=& g,  \\ 
   \delta\big((1_G\ot 1_G)\ot (1_S\ot v\ot 1_S)\big) &=& v, \\
   \delta\big((1_G\ot 1_G) \ot (1_S\ot w\ot 1_S)\big) &=& 0 
\end{eqnarray*}
\end{small}for
$0\leq i\leq p-1$, with all other values determined by these.
One can check directly that $\lambda$ and $\kappa$
are 2-cocycles
and that $\delta$ is a $1$-cocycle for $X$.
We will show that 
$$
[\delta,\kappa]\neq 0\quad\text{ and }\quad
[\lambda,\lambda]=[\lambda,\kappa]=0\, .
$$

\subsection*{The diagonal maps}
We give some values of the diagonal maps at play in finding 
the Gerstenhaber brackets.
The diagonal map $\Delta_C$ on the reduced bar resolution of $kG$ is 
deduced from~(\ref{DeltaBar}).  For example, after identifying $g^i$ 
with its image in $\overline{kG}$,
\begin{small}
$$
\begin{aligned}
\Delta_C(1_G \ot g^i \ot 1_G) =&\ 
(1_G\ot 1_G) \ot_{kG} (1_G\ot g^i \ot 1_G) +
(1_G\ot g^i \ot 1_G)\ot_{kG} (1_G\ot 1_G),\ \text{ and }
\\
\Delta_C(1_G\ot 1_G) =&\ (1_G\ot 1_G) \ot_{kG} (1_G\ot 1_G)\, .
\end{aligned}
$$
\end{small}The 
diagonal map $\Delta_D$ is found
from embedding the Koszul into the bar resolution and then using~(\ref{DeltaBar}).
For example, we identify $v\wedge w$ with $v\ot w - w\ot v$ and observe
that
\begin{small}
\begin{eqnarray*}
  \Delta_D (1_S\ot v\wedge w\ot 1_S) 
  & = & 
  (1_S\ot 1_S)\ot_S (1_S\ot v\wedge w\ot 1_S)\\
   && + (1_S\ot v\ot 1_S)\ot_S (1\ot w\ot 1) - (1_S\ot w\ot 1_S)
   \ot_S (1_S\ot v \ot 1_S) 
      \\
  && + (1_S\ot v\wedge w\ot 1_S)\ot_S (1_S\ot 1_S) \, ,\quad\text{ and }
\\
  \Delta_D (1_S\ot v\ot 1_S) & = & 
  (1_S\ot 1_S)\ot_S (1_S\ot v\ot 1_S)
  + (1_S\ot v\ot 1_S)\ot_S (1_S \ot 1_S) \, .
  \end{eqnarray*}
  \end{small}

\vspace{-1ex}
\subsection*{Homotopies}
Let $\phi_C: C\ot_{kG} C \rightarrow C$ be the homotopy
from $\mu_C\ot 1$ to $1\ot \mu_C$
from~(\ref{eqn:phi-bar}).
We choose the homotopy
$\phi_D: D\ot_{S}D\rightarrow D$ 
from $\mu_D\ot 1$ to $1\ot \mu_D$
given in~\cite[Definition~4.1.3]{NW1}
and record
a few values here for later use:
\begin{small}
  $$
\begin{aligned}
\phi_D((1\ot w\ot 1)\ot_S (v\ot 1))=&\ 1\ot v\wedge w\ot 1,
\quad
&\phi_D((1\ot v)\ot_S (1\ot w\ot 1))=\ 0,
\\
\phi_D((1\ot 1)\ot_S (1\ot v\ot 1)) =&\ 0,
\quad
&\ \ \ \phi_D((1\ot v\ot 1)\ot_S (1\ot 1))\ =\ 0\, .
\end{aligned}
$$
\end{small}

\vspace{-1ex}
\subsection*{Nonzero bracket}
We use Theorem~\ref{maintheorem} to show explicitly that $[\delta , \kappa ] =  \kappa$.
First note that $[\delta,\kappa]$ is zero on
all components of $X$ except
possibly $X_{0,2}$.
We consider the composition~(\ref{eqn:circ2}) with $f'=\delta$ and
$f=\kappa$ to find $\kappa\circ_X\delta$.
As a first step, we apply 
the map $(\Delta_X\ot 1_X)\Delta_X$ to the element 
$(1_G\ot 1_G)\ot (1_S\ot v\wedge w\ot 1_S)$
of $X_{0,2}$, where, recall
$$
\Delta_X =(1_C\ot \tau\ot 1_D) (\Delta_C\ot \Delta_D)\, .
$$ 
Direct calculation confirms that
\begin{small}
\begin{eqnarray*}
&& \hspace{-15ex} 
(1\ot 1)\ot (1\ot v\wedge w\ot 1) 
\xmapsto{\ (\Delta_X\ot 1)\Delta_X \ }\\
&& (1\ot 1)\ot (1\ot 1)\ot (1\ot 1)\ot (1\ot 1)\ot (1\ot 1)
      \ot (1\ot v\wedge w\ot 1)\\
  && +(1\ot 1)\ot (1\ot 1)\ot (1\ot 1)\ot (1\ot v\ot 1)\ot (1\ot 1)\ot (1\ot w\ot 1)\\
  && +(1\ot 1)\ot (1\ot v\ot 1)\ot (1\ot 1)\ot (1\ot 1)\ot (1\ot 1)\ot (1\ot w\ot 1)\\
  && -(1\ot 1)\ot (1\ot 1)\ot (1\ot 1)\ot (1\ot w\ot 1)\ot (1\ot 1)\ot (1\ot v\ot 1)\\
  && - (1\ot 1)\ot (1\ot w\ot 1)\ot (1\ot 1)\ot (1\ot 1)\ot (1\ot 1)\ot (1\ot v\ot 1)\\
  && + (1\ot 1)\ot (1\ot 1)\ot (1\ot 1)\ot (1\ot v\wedge w\ot 1) \ot (1\ot 1)\ot (1\ot 1)\\
  && + (1\ot 1)\ot (1\ot v\ot 1)\ot (1\ot 1)\ot (1\ot w\ot 1) \ot (1\ot 1)\ot (1\ot 1)\\
  && - (1\ot 1)\ot (1\ot w\ot 1)\ot (1\ot 1)\ot (1\ot v\ot 1)\ot (1\ot 1)\ot (1\ot 1)\\
  && + (1\ot 1)\ot (1\ot v\wedge w\ot 1)\ot (1\ot 1)\ot (1\ot 1)\ot (1\ot 1)\ot (1\ot 1)
\end{eqnarray*}
\end{small}\@as an element of $X\ot_A X\ot_A X$.
We have suppressed all subscripts for brevity; for example, 
the second summand may be written
\begin{small}
$$
  \big((1_G\ot_{kG} 1_G)\ot (1_S\ot_S 1_S)\big)\ot_A 
  \big((1_G\ot_{kG} 1_G)\ot (1_S\ot_S v\ot_S 1_S)\big)\ot_A 
  \big((1_G\ot_{kG} 1_G)
    \ot (1_S\ot_S w\ot_S 1_S)\big)\, .
$$
\end{small}We
next apply the map
$1_X\ot \delta\ot 1_X$;
it is nonzero on exactly two summands, the second and the penultimate, and we obtain
(with the tensor products over $A$ indicated here)
\begin{small}
\[
  \big((1_G\ot 1_G)\ot (1_S\ot v)\big)
  \ot_A 
  \big((1_G\ot 1_G)\ot (1_S\ot w\ot 1_S)\big)
   - 
   \big((1_G\ot 1_G)\ot (1_S\ot w\ot v)\big)
   \ot_A
   \big( (1_G\ot 1_G)\ot (1_S\ot 1_S)\big) .
\]
\end{small}To apply $\phi_X$ next, we first 
rearrange terms with
$1_G\ot \tau^{-1}\ot 1_S$, producing
\begin{small}
\[
   (1_G\ot 1_G)\ot (1_G\ot 1_G)\ot (1_S\ot v)\ot (1_S\ot w\ot 1_S) 
    - (1_G\ot 1_G)\ot (1_G\ot 1_G)\ot (1_S\ot w\ot v)\ot (1_S\ot 1_S), 
\]
\end{small}and 
then apply the map $\phi_C\ot \mu_D\ot 1_D + 1_C\ot\mu_C\ot \phi_D$
to obtain
\begin{small}
\[
   (1_G\ot 1_G\ot 1_G)\ot (v\ot w\ot 1_S) 
   - (1_G\ot 1_G)\ot (1_S\ot v\wedge w\ot 1_S).
   \]\end{small}Lastly,
   we apply $\kappa$  as the last step of~(\ref{eqn:circ2})
and obtain $0$ from 
the first term and $-g$ from the second.
Thus 
\begin{small}
\[
    (\kappa\circ\delta)\big((1_G\ot 1_G)\ot (1_S\ot v\wedge w\ot 1_S)\big) 
    = -g 
    = \kappa\big( (1_G\ot 1_G) \ot (1_S\ot v\wedge w \ot 1_S) \big)
    \,
\]
\end{small}and $\kappa\circ_X\delta = - \kappa$.
We inspect the above calculation with an eye toward switching the order
of $\kappa$ and $\delta$ and deduce that  $\delta\circ_X\kappa = 0$.
We conclude, as claimed,
\[
   [\delta,\kappa] = \delta\circ_X\kappa - \kappa\circ_X\delta = \kappa \, .
\]

\subsection*{Zero brackets}
We now use Theorem~\ref{maintheorem} to show that  $[\lambda,f]=0$ 
when $f$ is $\lambda$ or $\kappa$.
We evaluate
composition~(\ref{eqn:circ2}) 
on $X_{1,2}$ with $f'=\lambda$.
Other calculations are similar.
We first apply 
$
\Delta_X =(1_G\ot \tau\ot 1_S) (\Delta_C\ot \Delta_D)\, 
$
to sample input in $X_{1,2}$, noting that
$ {}^{g^i} w = iv+w$ (with  subscripts suppressed again):
\begin{small}
\begin{eqnarray*}
&&\hspace{-25ex}(1\ot g^i\ot 1)\ot (1\ot v\wedge w\ot 1)\\
&\xmapsto{\Delta_C\ot \Delta_D}
& (1\ot 1) \ot (1\ot g^i\ot 1)\ot (1\ot 1)\ot (1\ot v\wedge w\ot 1)\\
 &&+(1\ot 1)\ot (1\ot g^i\ot 1)\ot (1\ot v\ot 1)\ot (1\ot w\ot 1)\\
&&- (1\ot 1)\ot (1\ot g^i\ot 1)\ot (1\ot w\ot 1)\ot (1\ot v\ot 1)\\
 && + (1\ot 1)\ot (1\ot g^i\ot 1)\ot (1\ot v\wedge w\ot 1)\ot (1\ot 1)\\
&&+(1\ot g^i\ot 1)\ot (1\ot 1)\ot (1\ot 1) \ot (1\ot v\wedge w\ot 1)\\
 &&+(1\ot g^i\ot 1)\ot (1\ot 1)\ot (1\ot v\ot 1)\ot (1\ot w\ot 1)\\
  &&- (1\ot g^i\ot 1)\ot (1\ot 1)\ot (1\ot w\ot 1)\ot (1\ot v\ot 1)\\
 &&+ (1\ot g^i\ot 1)\ot (1\ot 1)\ot (1\ot v\wedge w\ot 1)\ot (1\ot 1)\, 
\end{eqnarray*}
\end{small}
\vspace{-4ex}
\begin{small}
\begin{eqnarray*}
\hspace{5ex}
&\xmapsto{1\ot \tau\ot 1}&
 (1\ot 1)\ot (1\ot 1)\ot (1\ot g^i\ot 1)\ot (1\ot v\wedge w\ot 1)\\
&& - (1\ot 1)\ot (1\ot v\ot 1)\ot (1\ot g^i\ot 1)\ot (1\ot w\ot 1)\\
&&+(1\ot 1)\ot (1\ot (iv+w)\ot 1) \ot (1\ot g^i\ot 1)\ot (1\ot v\ot 1)\\
&&+ (1\ot 1)\ot (1\ot v\wedge w\ot 1)\ot (1\ot g^i\ot 1)\ot (1\ot 1)\\
&&+ (1\ot g^i\ot 1) \ot (1\ot 1)\ot (1\ot 1)\ot (1\ot v\wedge w\ot 1)\\
&&+(1\ot g^i\ot 1)\ot (1\ot v\ot 1)\ot (1\ot 1)\ot (1\ot w\ot 1)\\
 &&-(1\ot g^i\ot 1)\ot (1\ot w\ot 1)\ot (1\ot 1)\ot (1\ot v\ot 1)\\
&&+(1\ot g^i\ot 1)\ot (1\ot v\wedge w\ot 1)\ot (1\ot 1)\ot (1\ot 1)\, ,
\end{eqnarray*}
\end{small}an element of $X\ot_A X$.
Next we apply $\Delta_X\ot 1_X$:
Evaluating ${\Delta_C\ot \Delta_D\ot 1_X}$ on the last expression
yields 27 summands;
the map $(1_G\ot\tau\ot 1_S)\ot 1_X $ 
transforms these to 27 summands in $X\ot_A X\ot_A X$.
A quick check verifies that $1_X\ot \lambda\ot 1_X$ vanishes on all but two summands,
namely
\begin{small}
\[
\begin{aligned}
& - \big((1_G\ot 1_G)\ot (1_S\ot 1_S)\big)
\ot_A 
\big((1_G\ot g^i\ot 1_G)\ot (1_S\ot w\ot 1_S)\big)
\ot_A 
\big((1_G\ot 1_G)\ot (1_S\ot v\ot 1_S)\big),
\\
& -\big((1_G\ot 1_G)\ot (1_S\ot v\ot 1_S)\big)
\ot_A 
\big(1_G\ot g^i\ot 1_G)\ot (1_S\ot w\ot 1_S)\big)
  \ot_A 
  \big((1_G\ot 1_G)\ot (1_S\ot 1_S) \big),
\end{aligned}
\]
\end{small}
and we obtain 
\begin{small}
\[
\begin{aligned}
&&
-\big((1_G\ot 1_G)\ot (1_S\ot 1_S)\big) 
\ot_A 
\big((ig^{i-1}\ot 1_G)\ot (1_S\ot v\ot 1_S)\big)\\
&&
- \big((1_G\ot 1_G)\ot (1_S\ot v\ot 1_S)\big)
\ot_A 
\big((ig^{i-1}\ot 1_G)\ot (1_S\ot 1_S)\big) .
\end{aligned}
\]
\end{small}Applying 
$\phi_X$ followed by $f=\lambda$ 
or $f=\kappa$ gives
$ 0$ as  $w$ does not appear in the input.

\begin{remark}{\em 
The cocycles $\lambda$ and $\kappa$ above were
not chosen randomly.  These cocycles
define a PBW deformation of $S\rtimes G$, and
the zero brackets calculated above predict the PBW property.  
Indeed, in~\cite{ueber}, we considered
PBW deformations 
of $S\rtimes G$ given by analogs
of Lusztig's graded Hecke algebras and symplectic reflection algebras
over fields of positive characteristic.
These algebras $\mathcal{H}_{\lambda,\kappa}$ 
depend on two parameters $\lambda$ and $\kappa$
with $\lambda: kG\otimes V \rightarrow kG$
and $\kappa: V\otimes V \rightarrow kG$.
The Hochschild 2-cocycles above of the same name
$\lambda$ and $\kappa$ are these parameters
converted into cocycles
on the resolution $X$;
see~\cite[Example~2.2]{ueber} and 
also~\cite[Section~5]{AW-EZ}.
A necessary condition for the parameters $\lambda $ and $\kappa$
to define a PBW deformation is that
$$
[\lambda, \lambda]=0 \text{ and }
[\lambda, \kappa]=0
$$
when the cochains $\kappa$ and $\lambda$ they define
are cocycles.  (More generally, we require
that $\lambda$ is a cocycle, $[\lambda, \lambda]=0$,
and $[\lambda, \lambda]=2\del^*\kappa$.)
Thus knowing explicit values for brackets is helpful 
for finding new deformations.
The cocycle $\delta$ above is included merely for
illustration purposes; it provides an example
of a nonzero Gerstenhaber bracket.
}
\end{remark}

\end{document}